\DeclareFontFamily{U}{FdSymbolC}{}
\DeclareFontShape{U}{FdSymbolC}{m}{n}{<-> s * FdSymbolC-Book}{}
\DeclareSymbolFont{fdarrows}{U}{FdSymbolC}{m}{n}
\DeclareMathSymbol{\vDdash}{\mathrel}{fdarrows}{254}
\DeclareFontFamily{U}{FdSymbolD}{}
\DeclareFontShape{U}{FdSymbolD}{m}{n}{<-> s * FdSymbolD-Book}{}
\DeclareSymbolFont{fdnarrows}{U}{FdSymbolD}{m}{n}
\DeclareMathSymbol{\nvDdash}{\mathrel}{fdnarrows}{254}
\newtheorem{assumption}{Assumption}
\newtheorem{theorem}{Theorem}
\newtheorem{proposition}{Proposition}
\newtheorem{problem}{Problem}
\newcommand{\opt}{\textrm{opt}}
\newcommand{\con}{\textrm{con}}
\newcommand{\diag}{\mathrm{diag}}
\newcommand{\dett}{\mathrm{det}}
\definecolor{bluencs}{rgb}{0.0, 0.53, 0.74}
\begin{document}

\title{\LARGE \bf Optimal Resource Scheduling and Allocation \\ under Allowable Over-Scheduling
\thanks{This work was supported by the H2020 ERC Consolidator Grant L2C (Grant 864017), the CHIST-ERA 2018 project DRUID-NET, the Walloon Region and the Innoviris Foundation.}
}

\author{Wei Ren, Eleftherios Vlahakis, Nikolaos Athanasopoulos and Rapha\"el M. Jungers
\thanks{W. Ren and R. M. Jungers are with ICTEAM institute, UCLouvain, 1348 Louvain-la-Neuve, Belgium. E. Vlahakis and N. Athanasopoulos are with School of Electronics, Electrical Engineering and Computer Science, Queen’s University Belfast, Northern Ireland, UK. Email: \texttt{\small w.ren@uclouvain.be, e.vlahakis@qub.ac.uk, n.athanasopoulos@qub.ac.uk, raphael.jungers@uclouvain.be}.}
\thanks{W. Ren and E. Vlahakis contributed equally.}
}

\maketitle

\begin{abstract}
This paper studies optimal scheduling and resource allocation under allowable over-scheduling.  Formulating an optimisation problem where over-scheduling is embedded, we derive an optimal solution that can be implemented by means of a new additive increase multiplicative decrease (AIMD) algorithm. After describing the AIMD-like scheduling mechanism as a switching system, we show convergence of the scheme, based on the joint spectral radius of symmetric matrices, and propose two methods for fitting an optimal AIMD tuning to the optimal solution derived. Finally, we demonstrate the overall optimal design strategy via an illustrative example.
\end{abstract}

\section{Introduction}
\label{sec-intro}

With the advances in networking and hardware technology, distributed computing is increasingly adopted in many applications like multi-agent systems \cite{Woolridge2001introduction} and network systems \cite{Kshemkalyani2011distributed}. Distributed computing systems have been proposed in different fields \cite{Hussain2013survey}. In particular, scalability, reliability, information sharing/exchange from remote sources are main motivations for the users of distributed systems \cite{Amir2000opportunity}. For distributed computing systems, many topics have been investigated \cite{Kimura2020packet, Pentelas2020network, Herrera2016resource}, such as power management, request scheduling, resource allocation and system reliability, thereby resulting in different models, algorithms and software tools.

Among these extensively-studied topics, resource allocation is one of most important problems due to its fundamental role in high performance of computing systems \cite{Hussain2013survey}. Resource allocation mechanisms include both scheduling of requests among different nodes and allocation of limited resources that are provisioned to requests entering computing systems. To solve resource allocation problems, many resource allocation models and strategies have been proposed \cite{Hussain2013survey}, such as proportional-share scheduling, market-based and auction-based strategies. To avoid an excess capacity of resources, the over-scheduling phenomena are excluded in most existing strategies \cite{Pentelas2020network, Ren2021optimal, Luna2017mixed}. Over-scheduling phenomena may result in overuse of resources \cite{Pantazoglou2015decentralized} and excess of predefined thresholds \cite{Pentelas2020network, Ren2021optimal, Luna2017mixed}, leading to performance degradation, device damages, economic loss and even environmental threats. Nevertheless, in some scenarios, over-scheduling can be neither avoided in some scenarios nor is always harmful to computing systems \cite{Pantazoglou2015decentralized}. For instance, over-scheduling is required in peak hours and may be caused by allocation strategies. Due to over-scheduling, the boundedness of the waiting time for each request is ensured and no additional techniques \cite{Ren2021optimal} are needed. To the best of our knowledge, there are few works on resource allocation under the over-scheduling.

In this paper, motivated by the potential benefits of over-scheduling, we deal with a resource allocation problem where a bounded over-scheduling is permitted. This effectively allows us to implement an optimal scheduling strategy via a new variation of the additive increase multiplicative decrease (AIMD) algorithm which is known for its robustness properties and scalability. Here, we introduce an average-based AIMD (A-AIMD) algorithm which tackles a scheduling task from an average perspective and ensures bounded waiting time for all requests. Our proposed AIMD-based scheduling mechanism is motivated by recently introduced AIMD-like algorithms utilized for scheduling \cite{Vlahakis2021AIMD, Ren2021optimal}. First, the over-scheduling is embedded into an optimization problem, whose objective is to minimize the sum of the response time and cost of all computing nodes. The optimal strategy provides the bounds on request scheduling and resource allocation without involving the A-AIMD mechanism. Hence, we highlight the inherent over-scheduling feature of the proposed A-AIMD algorithm that essentially guarantees that all requests are scheduled in finite time.

Although convergence and stability of typical AIMD schemes are well-studied in literature, see, e.g., \cite{Vlahakis2021AIMD}, these properties need to be revisited here, primarily, due to a different triggering condition which is, to the best of our knowledge, novel in the context of event-triggered distributed systems. Due to its definition, the proposed A-AIMD scheduling mechanism is intuitively transformed to a switching system the stability of which is shown by boundedness of the joint spectral radius of symmetric matrices \cite{Jungers2009joint}. Having established the convergence property of the scheduling scheme, we then highlight the relation between the magnitude of over-scheduling and the tuning parameters of the proposed A-AIMD setting. We finally propose two (complementary) methods for  parameter design that can straightforwardly be used for optimal AIMD tuning.

The remainder of this paper is structured as follows. Section \ref{sec-notepre} formulates the considered problem. The optimal strategy is proposed in Section \ref{sec-optimalschedul}. The A-AIMD mechanism is addressed in Section \ref{sec-AIMDschedule}. A numerical example is given in Section \ref{sec-example}. Conclusion and future work are presented in Section \ref{sec-conclusion}.

\textbf{Notation.} $\mathbb{N}:=\{0, 1, \ldots\}; \mathbb{N}^{+}:=\{1, 2, \ldots\}; \mathbb{R}:=(-\infty, +\infty)$. $\mathbb{R}^{n}$ is the $n$-dimensional Euclidean space. Given a function $f: \mathbb{R}^{n}\rightarrow\mathbb{R}$ and $x=(x_{1}, \ldots, x_{n})\in\mathbb{R}^{n}$, we denote by $\nabla f(x)$ the gradient of $f$ with respect to $x$. Given a bounded set of matrices $\Sigma=\{A_{1}, \ldots, A_{m}\}$, the joint spectral radius of $\Sigma$ is defined as $\mathsf{JSR}(\Sigma)=\lim_{k\rightarrow\infty}\max_{\sigma\in\{1, \ldots, m\}^{k}}\|A_{\sigma_{k}} \ldots A_{\sigma_{2}}A_{\sigma_{1}}\|^{1/k}$.

\section{Problem Formulation}
\label{sec-notepre}

We consider  requests arriving at the dispatcher of a computing system illustrated in Fig. \ref{fig-1}. We model incoming requests as a continuous constant process with an arrival rate $\lambda>0$; see also \cite{Vlahakis2021AIMD}. On their arrival, incoming requests are first queued in the dispatcher and then are distributed to $n$ computing nodes. Each computing node scales its service capacity to minimize the processing time and cost.

To describe the number of all queued requests in the dispatcher at each time instant, the variable $\delta(t)\geq0$ is introduced to denote the backlog of the dispatcher. For each computing node ($i\in\mathcal{N}:=\{1, \ldots, n\}$), $u_{i}(t)\geq0$ is the scheduling rate at which requests are distributed to the $i$-th computing node from the dispatcher at time $t$; $\gamma_{i}(t)>0$ is the service rate of the $i$-th computing node at time $t>0$. If $u_{i}(t)>\gamma_{i}(t)$, then the $i$-th computing node may store some requests before their execution. In this respect, similar to the dispatcher, the variable $w_{i}(t)\geq0$ is introduced to denote the backlog of the $i$-th computing node. Since not all computing nodes need to be activated, $u_{i}(t)$ and $\gamma_{i}(t)$ can be zero. The next assumption is made.

\begin{assumption}
\label{asp-1}
For the considered computing system, the following holds.
\begin{enumerate}
  \item For each $i\in\mathcal{N}$, there exists $\gamma^{\max}_{i}>0$ such that $u_{i}(t)<\gamma_{i}(t)\leq\gamma^{\max}_{i}$ for all $t>0$.
  \item No constraints are imposed on the maximum backlog in the dispatcher.
\end{enumerate}
\end{assumption}

In the first item of Assumption \ref{asp-1}, the first part of the inequality $\gamma_{i}(t)\leq\gamma^{\max}_{i}$ comes from the limited capacity of computing nodes, while $u_{i}(t)<\gamma_{i}(t)$ can be imposed by the scheduling and allocation strategy to be designed. The second assumption implies that the dispatcher has sufficiently large capacity to store the incoming requests, which can be satisfied in practice; see, e.g., \cite{Amazon2015unlimited}.

\subsection{Average-based AIMD Mechanism}
\label{subsec-AIMDstrategy}

The classic AIMD mechanism \cite[Ch. 14]{Corless2016aimd} consists of  an additive increase (AI) phase and a multiplicative decrease (MD) phase. In the AI phase, the scheduling rates increase linearly with an additive rate $\alpha_{i}>0$ until the capacity event $\sum_{i=1}^{n}u_{i}=\lambda$ occurs. Immediately after the event, the MD phase is activated and the scheduling rate experiences an instantaneous decrease (i.e., a jump) with a multiplicative factor $\beta_{i}\in(0, 1)$. In this way, the scheduling rates evolve as follows:
\begin{align}
\label{eqn-1}
u_{i}(t)=\beta_{i}u_{i}(t_{k})+\alpha_{i}(t-t_{k}), \quad \forall t\in(t_{k}, t_{k+1}],
\end{align}
where $t_{k}$ is the activation time of the MD phase and $k\in\mathbb{N}$. That is, $\lambda=\sum_{i=1}^{n}u_{i}(t_{k})$ is the event-triggering condition for a typical AIMD scheme \cite{Corless2016aimd}.

\begin{figure}[!t]
\begin{center}
\begin{picture}(40, 85)
\put(-45, -10){\resizebox{40mm}{30mm}{\includegraphics[width=2.5in]{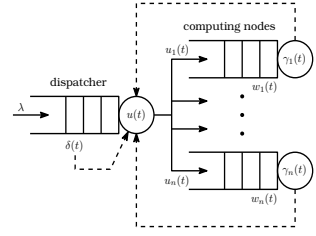}}}
\end{picture}
\end{center}
\caption{Illustration of the request scheduling and resource allocation.}
\label{fig-1}
\end{figure}

For the request scheduling and resource allocation as in Fig. \ref{fig-1}, the dynamics of $\delta(t)$ and $w_{i}(t)$ is derived below \cite{Presti1996bounds}:
\begin{align}
\label{eqn-2}
\begin{aligned}
\dot{\delta}(t)=\lambda-\mathbf{u}(t), \quad \dot{w}_{i}(t)=u_{i}(t)-\gamma_{i}(t),
\end{aligned}
\end{align}
where $\mathbf{u}(t):=\sum^{n}_{i=1}u_{i}(t)$. From \eqref{eqn-2} and Assumption \ref{asp-1}, it holds that $\dot{w}_{i}\leq0$, which implies the boundedness of the backlogs of all computing nodes. However, $\sum_{i=1}^{n}u_{i}=\lambda$ cannot ensure the boundedness of $\delta(t)$, since the over-scheduling is not allowed in the classic AIMD mechanism. To ensure that $\delta(t)$ is finite and bounded for all $t>0$, we propose an AIMD-like algorithm with a novel triggering condition. We call this average-based AIMD (A-AIMD).

Instead of $\sum_{i=1}^{n}u_{i}=\lambda$, the jumps in the A-AIMD mechanism occur when $\delta=0$, that is, no backlog in the dispatcher and thus no requests to be scheduled. Hence, the scheduling rates jump when $\delta=0$, and the evolution of the scheduling rate is given in \eqref{eqn-1}, where $t_{k}$ is such that $\delta(t_{k})=0$. From \eqref{eqn-2}, $\delta(t_{k})=0$ implies
\begin{align}
\label{eqn-3}
\lambda\mathsf{T}_{k}=\scaleobj{0.8}{\int_{t_{k}}^{t_{k+1}}}\scaleobj{0.8}{\sum_{i=1}^{n}}u_{i}(t)dt=\scaleobj{0.8}{\sum_{i=1}^{n}}(\beta_{i} u_{i}(t_{k})+0.5\alpha_{i}\mathsf{T}_{k})\mathsf{T}_{k},
\end{align}
where $\mathsf{T}_{k}:=t_{k+1}-t_{k}$, $k\in\mathbb{N}$. In this case, the event-triggered mechanism (ETM) for the MD phase is
\begin{align}
\label{eqn-4}
t_{k+1}&=\min\scaleobj{1.5}{\{}t>t_{k}: \lambda=\scaleobj{0.8}{\sum_{i=1}^{n}}(\beta_{i}u_{i}(t_{k})+0.5\alpha_{i}\mathsf{T}_{k})\scaleobj{1.5}{\}},
\end{align}
which is the same as the one in \cite{Vlahakis2021AIMD}. We call it A-AIMD triggering condition as  $\beta_{i}u_{i}(t_{k})+0.5\alpha_{i}\mathsf{T}_{k}$ is the average share of each computing node between two successive jumps \cite[Ch. 2]{Corless2016aimd}.

Comparing with the classic AIMD mechanism, the A-AIMD mechanism allows for the over-scheduling. To be specific, if $\lambda=\sum_{i=1}^{n}(\beta_{i}u_{i}(t_{k})+0.5\alpha_{i}\mathsf{T}_{k})$ is satisfied such that the MD phase is activated, then $\sum_{i=1}^{n}(\beta_{i}u_{i}(t_{k})+\alpha_{i}\mathsf{T}_{k})>\lambda$, which implies that over-scheduling is inherent in the A-AIMD mechanism. However, over-scheduling may impose potential threats \cite{Pentelas2020network}, and hence, its permissible magnitude should be subject to system constraints \cite{Pantazoglou2015decentralized}. To avoid the unallowable over-scheduling, we assume
\begin{align}
\label{eqn-5}
\scaleobj{0.8}{\sum_{i=1}^{n}}u_{i}(t_{k})=\scaleobj{0.8}{\sum_{i=1}^{n}}(\beta_{i}u_{i}(t_{k})+\alpha_{i}\mathsf{T}_{k})\leq\phi\lambda,
\end{align}
where $\phi>1$ is given \emph{a priori}. By combining \eqref{eqn-4} and \eqref{eqn-5}, we write the proposed A-AIMD system as follows:
\begin{subequations}
\label{eqn-6}
\begin{align}
\label{eqn-6a}
u_{i}(t)&=\beta_{i}u_{i}(t_{k})+\alpha_{i}(t-t_{k}), \quad \forall t\in(t_{k}, t_{k+1}], \\
\label{eqn-6b}
t_{k+1}&=\min\scaleobj{1.5}{\{}t>t_{k}: \scaleobj{0.8}{\sum_{i=1}^{n}}(\beta_{i}u_{i}(t_{k})+0.5\alpha_{i}\mathsf{T}_{k})=\lambda \nonumber  \\
&\quad  \text{ or } \scaleobj{0.8}{\sum_{i=1}^{n}}(\beta_{i}u_{i}(t_{k})+\alpha_{i}\mathsf{T}_{k})=\phi\lambda\scaleobj{1.5}{\}}.
\end{align}
\end{subequations}
In view of \eqref{eqn-6}, the over-scheduling is clearly confined by the second item of the ETM which can be controlled by parameter $\phi$. One of the main contributions of this paper is to show that A-AIMD system \eqref{eqn-6a} is stable under triggering condition \eqref{eqn-6b}. Hence, the problem to be studied in this paper is summarized below.

\begin{problem}
\label{prob-1}
Consider the distributed computing system in Fig. \ref{fig-1} and the A-AIMD mechanism \eqref{eqn-6}. Given a well-defined objective function,  Assumption \ref{asp-1}, and allowable over-scheduling, (1) determine the optimal strategy for both request scheduling and resource allocation; (2) establish the convergence property of the A-AIMD mechanism; (3) propose methods for AIMD tuning so that the derived optimal strategy is obtained by means of the A-AIMD mechanism.
\end{problem}

\section{Optimal Scheduling and Allocation}
\label{sec-optimalschedul}

To deal with Problem \ref{prob-1}, we start with the first item by formulating and solving the optimization problem, the solution of which lays a foundation for the following two items in the next section.

\subsection{Cost Function and Optimization Problem}
\label{subsec-approbisimu}

To evaluate the scheduling and allocation, we first consider the QoS performance. Here, we propose the following average mean response time \cite{Tang2014dynamic} as the QoS performance:
\begin{align}
\label{eqn-7}
\mathcal{T}:=\scaleobj{0.8}{\sum_{i=1}^{n}}\lambda^{-1}u_{i}(\gamma_{i}-u_{i})^{-1}.
\end{align}
Since $\mathcal{T}$ needs to be non-negative and bounded, $\gamma_{i}>u_{i}$ is required, which also shows the reasonability of Assumption \ref{asp-1}. Under Assumption \ref{asp-1}, $\mathcal{T}$ is convex with respect to $\gamma_{i}$ and $u_{i}$, respectively.

To evaluate cost of all computing nodes, we investigate the power consumption via the total service cost defined below:
\begin{align}
\label{eqn-8}
\mathcal{C}:=\scaleobj{0.8}{\sum_{i=1}^{n}}\lambda^{-1}u_{i}\varphi_{i}(\gamma_{i}),
\end{align}
where $\varphi_{i}(\gamma_{i}): \mathbb{R}^{+}\rightarrow\mathbb{R}^{+}$ is the service cost for each computing node. The service cost $\varphi_{i}(\gamma_{i})$ includes the routing cost and the computing cost. The routing cost is assumed to be a constant, whereas the computing cost is related to and non-decreasing in the service rate $\gamma_{i}$. That is, the computing cost will increase if more computing resources are requested by each computing node. The computing cost includes the cost of the power used by each computing node and the memory costs required by each computing node. In this respect, for each computing node, its service cost is defined as follows:
\begin{align}
\label{eqn-9}
\varphi_{i}(\gamma_{i}):=a_{i}\gamma^{b_{i}}_{i}+c_{i}\gamma_{i}+d_{i}, \quad \forall i\in\mathcal{N},
\end{align}
where the first item is the power cost which is monomial in the service rate; the second item is the cost of processor and storage memory; the last item is the routing cost; and $a_{i}, c_{i}, d_{i}>0$ and $b_{i}>1$. It is easy to check that $\varphi_{i}(\gamma_{i})$ is convex, and its derivative with respect to $\gamma_{i}\geq 0$, i.e.,  $\varphi^{\prime}_{i}(\gamma_{i}):=a_{i}b_{i}\gamma^{b_{i}-1}_{i}+c_{i}$, is positive and increasing with respect to $\gamma_{i}$. Since the service rate is upper bounded in practical systems, we can define $\varphi_{i}(\gamma_{i})=\infty$ if $\gamma_{i}>\gamma^{\max}_{i}$; see, e.g., \cite{Tang2014dynamic}.

Combining \eqref{eqn-7} and \eqref{eqn-8} yields the cost function $\mathcal{J}:=\mathcal{T}+K\mathcal{C}$, where $K>0$ is a fixed weight. Hence, the following optimization problem can be formulated
\begin{subequations}
\label{eqn-10}
\begin{align}
\label{eqn-10a}
\min&\quad \scaleobj{0.8}{\sum_{i=1}^{n}}\lambda^{-1}u_{i}((\gamma_{i}-u_{i})^{-1}+K\varphi_{i}(\gamma_{i})) \\
\label{eqn-10b}
\text{s.t.}& \quad \scaleobj{0.8}{\sum_{i=1}^{n}}u_{i}=\phi\lambda,  \quad  \phi>1,  \\
\label{eqn-10c}
&\quad \gamma_{i}\leq\gamma^{\max}_{i},  \quad  0\leq u_{i}<\gamma_{i},\quad \forall i\in\mathcal{N},
\end{align}
\end{subequations}
where \eqref{eqn-10b} is from \eqref{eqn-5}, and \eqref{eqn-10c} is from Assumption \ref{asp-1}.

\subsection{Optimal Strategy under Over-Scheduling}

The solution to \eqref{eqn-10} is derived in this subsection. Since not all computing nodes need to be activated, we first focus on how to choose the computing nodes to be activated. Based on the cost function \eqref{eqn-10a}, we introduce the following variable for each computing node:
\begin{align}
\label{eqn-11}
\theta_{i}:=\phi\min\nolimits_{0<\gamma\leq\gamma^{\max}_{i}}\{\gamma^{-1}+K\varphi_{i}(\gamma)\}, \quad \forall  i\in\mathcal{N},
\end{align}
which can be treated as the price of each computing node \cite{Tang2014dynamic}. To show this, if the dispatcher pays one euro per unit service time and $K$ euros per unit cost, then the price of each computing node is the expected total price that the dispatcher pays to this computing node. In this way, the dispatcher should choose the computing nodes with the lowest prices to ensure the minimization of the cost function. Hence, there exists a threshold for each computing node to decide whether this computing node is activated to schedule and process the requests. That is, each computing node is activated only when its price is below its threshold. For each $i\in\mathcal{N}$ and any $\theta\geq\phi Kd_{i}$, we consider the equation with respect to $\gamma$:
\begin{align}
\label{eqn-12}
K\phi\varphi_{i}(\gamma)+K\phi\gamma\varphi^{\prime}_{i}(\gamma)=\theta.
\end{align}
In \eqref{eqn-12}, $K\phi\varphi_{i}(\gamma)+K\phi\gamma\varphi^{\prime}_{i}(\gamma)$ is lower bounded by $\phi Kd_{i}$ and upper bounded due to $\gamma\leq\gamma^{\max}_{i}$. Let $\gamma=g_{i}(\theta)$ be the solution to \eqref{eqn-12}. Since $\varphi_{i}(\gamma)$ and $\varphi^{\prime}_{i}(\gamma)$ are positive and increasing, $g_{i}(\cdot)$ is an increasing function. If $\theta$ exceeds certain bound $\theta^{\max}_{i}$, which is related to $\gamma^{\max}_{i}$, then no solution to \eqref{eqn-12} exists. In this case, let $g_{i}(\theta)=g_{i}(\theta^{\max}_{i})$ for all $\theta\geq\theta^{\max}_{i}$ and $i\in\mathcal{N}$.

\begin{theorem}
\label{thm-1}
Consider the problem \eqref{eqn-10}. Let Assumption \ref{asp-1} hold and $\min_{i\in\mathcal{N}}\{\phi Kd_{i}\}<\theta_{1}\leq\ldots\leq\theta_{n}\leq\theta_{n+1}=\max_{i\in\mathcal{N}}\{g^{-1}_{i}(\gamma^{\max}_{i})\}$ with the inverse function $g^{-1}_{i}$. The optimal scheduling rates and service capacities are
\begin{align}
\label{eqn-13}
\gamma^{\opt}_{i}&=\left\{\begin{aligned}
&g_{i}(\theta), && \text{ for } 1\leq i\leq n^{\ast}, \\
&0, && \text{ for } n^{\ast}<i\leq n,
\end{aligned}\right. \\
\label{eqn-14}
u^{\opt}_{i}&=\left\{\begin{aligned}
&\gamma_{i}^{\opt}-\sqrt{\frac{\phi\gamma^{\opt}_{i}}{\theta-K\phi\varphi_{i}(\gamma^{\opt}_{i})}}, && \text { for } 1\leq i\leq n^{\ast},  \\
&0, && \text { for } n^{\ast}<i\leq n,
\end{aligned}\right.
\end{align}
where $\theta\in(\theta_{n^{\ast}}, \theta_{n^{\ast}+1}]$ is such that $\sum_{i=1}^{n}u_{i}^{\opt}=\phi\lambda$ and $n^{\ast}:=\arg\max_{k\in\mathcal{N}}\{\theta_{k}: \sum_{i=1}^{k}(g_{i}(\theta_{k})-1/\sqrt{K\varphi^{\prime}_{i}(g_{i}(\theta_{k}))})\leq\phi\lambda\}$.
\end{theorem}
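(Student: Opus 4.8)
The plan is to treat \eqref{eqn-10} as a \emph{hidden} convex program and solve it by a Lagrangian/KKT analysis that produces a water-filling structure, with $\theta$ playing the role of a common ``price'' multiplier. The objective $u_i(\gamma_i-u_i)^{-1}$ is only separately convex (as noted after \eqref{eqn-7}), not jointly convex in $(u_i,\gamma_i)$, so I would not apply KKT to \eqref{eqn-10} directly. Instead I would first eliminate the service rates by partial minimisation and then show that the resulting reduced problem in $u=(u_1,\dots,u_n)$ is genuinely convex, so that its KKT conditions are both necessary and sufficient.

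First I would fix a feasible $u$ and minimise each summand of \eqref{eqn-10a} over $\gamma_i\in(u_i,\gamma_i^{\max}]$. Each scalar subproblem is convex, since $(\gamma_i-u_i)^{-1}$ and $\varphi_i$ are convex in $\gamma_i$, and its stationarity condition $-u_i(\gamma_i-u_i)^{-2}+Ku_i\varphi_i'(\gamma_i)=0$ reduces to $(\gamma_i-u_i)^2=1/(K\varphi_i'(\gamma_i))$, defining an increasing map $\gamma_i=\Gamma_i(u_i)$ (saturated at $\gamma_i^{\max}$). Writing $\psi_i(u_i)$ for the minimised summand, the envelope theorem gives the direct partial $\lambda^{-1}(\gamma_i(\gamma_i-u_i)^{-2}+K\varphi_i(\gamma_i))$ at $\gamma_i=\Gamma_i(u_i)$, which by the defining relation of $\Gamma_i$ equals $\psi_i'(u_i)=\lambda^{-1}K(\varphi_i(\Gamma_i(u_i))+\Gamma_i(u_i)\varphi_i'(\Gamma_i(u_i)))$. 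Since $\gamma\mapsto\varphi_i(\gamma)+\gamma\varphi_i'(\gamma)=(\gamma\varphi_i)'(\gamma)$ is increasing (as $\gamma\varphi_i$ is convex) and $\Gamma_i$ is increasing in $u_i$ by implicit differentiation of its defining relation, $\psi_i'$ is increasing and hence each $\psi_i$ is convex. Establishing this convexity of the partially minimised objective is the crux of the argument and the step I expect to be hardest, since it is precisely what rescues a convex program from the non-jointly-convex objective.

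With the reduced problem $\min_u\sum_i\psi_i(u_i)$ subject to $\sum_iu_i=\phi\lambda$, $u_i\ge0$ now convex, I would attach a multiplier $\nu$ to the budget constraint and set $\theta:=\phi\lambda\nu$. Stationarity at an active node reads $\lambda^{-1}K(\varphi_i(\gamma_i)+\gamma_i\varphi_i'(\gamma_i))=\nu$, i.e. $K\phi(\varphi_i(\gamma_i)+\gamma_i\varphi_i'(\gamma_i))=\theta$, which is exactly \eqref{eqn-12} and yields $\gamma_i^{\opt}=g_i(\theta)$; substituting into $(\gamma_i-u_i)^2=1/(K\varphi_i'(\gamma_i))=\phi\gamma_i/(\theta-K\phi\varphi_i(\gamma_i))$ and solving for $u_i$ returns \eqref{eqn-14}. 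For an inactive node complementary slackness gives $\psi_i'(0^+)\ge\nu$, and since $\psi_i'(0^+)=\lambda^{-1}\min_{\gamma}\{\gamma^{-1}+K\varphi_i(\gamma)\}=\lambda^{-1}\theta_i/\phi$ by \eqref{eqn-11}, this is $\theta_i\ge\theta$; conversely a node is active precisely when $\theta_i<\theta$ (and indeed $u_i^{\opt}(\theta_i)=g_i(\theta_i)-1/\sqrt{K\varphi_i'(g_i(\theta_i))}=0$). Sorting $\theta_1\le\cdots\le\theta_n$ therefore makes the active set $\{1,\dots,n^{\ast}\}$, giving the index split in \eqref{eqn-13}--\eqref{eqn-14}.

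Finally I would pin down $\theta$ and $n^{\ast}$. Each $u_i^{\opt}(\theta)$ is nonnegative and strictly increasing for $\theta>\theta_i$ (because $g_i$ is increasing and $u_i=\gamma_i-1/\sqrt{K\varphi_i'(\gamma_i)}$ increases with $\gamma_i$), so $\theta\mapsto\sum_{i:\theta_i<\theta}u_i^{\opt}(\theta)$ is continuous and strictly increasing, whence the budget equation $\sum_iu_i^{\opt}=\phi\lambda$ has a unique root $\theta$. The largest index $k$ for which the total at its own threshold, $\sum_{i=1}^{k}(g_i(\theta_k)-1/\sqrt{K\varphi_i'(g_i(\theta_k))})$, still does not exceed $\phi\lambda$ is exactly the stated $n^{\ast}$, and the root then satisfies $\theta\in(\theta_{n^{\ast}},\theta_{n^{\ast}+1}]$. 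Since the reduced program is convex, this KKT point is the global minimiser, which together with $\gamma_i^{\opt}=\Gamma_i(u_i^{\opt})=g_i(\theta)$ completes the proof.
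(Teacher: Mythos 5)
Your proposal is correct, but it takes a genuinely different route from the paper's. The paper applies KKT \emph{directly} to the non-convex program \eqref{eqn-10}: it verifies the Guignard constraint qualification (via Hoheisel--Kanzow) so that the KKT system \eqref{eqn-16} is necessary at any optimum, extracts \eqref{eqn-13}--\eqref{eqn-14} and the threshold ordering from \eqref{eqn-17}--\eqref{eqn-20}, and then closes with a separate uniqueness-by-contradiction step that invokes an external result of Fujisawa--Kuh, implicitly using existence of a minimizer so that the unique KKT point must be it. You instead \emph{convexify by partial minimisation}: eliminating $\gamma_i$ through the inner stationarity $(\gamma_i-u_i)^2=1/(K\varphi_i'(\gamma_i))$ and proving via the envelope theorem that $\psi_i'=\lambda^{-1}K\,(\gamma\varphi_i)'\circ\Gamma_i$ is increasing --- correctly identified as the crux, since marginal functions of non-jointly-convex objectives need not be convex, and your composition argument ($\gamma\varphi_i$ convex because $a_i\gamma^{b_i+1}+c_i\gamma^2+d_i\gamma$ has increasing derivative for $b_i>1$, and $\Gamma_i'\in(0,1]$ by implicit differentiation) checks out --- so that KKT on the reduced problem in $u$ is both necessary \emph{and sufficient}. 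This buys you three things the paper's route does not give as cleanly: the threshold condition falls out of the tidy identity $\psi_i'(0^+)=\lambda^{-1}\theta_i/\phi$ (which also explains the price interpretation of \eqref{eqn-11}); the strict constraint $u_i<\gamma_i$, which forces the paper into vanishing-constraints machinery, is sidestepped because your inner minimiser is automatically interior and $\psi_i(u_i)\to\infty$ as $u_i\to\gamma_i^{\max}$; and your strict monotonicity of $\theta\mapsto\sum_{i:\theta_i<\theta}u_i^{\opt}(\theta)$ delivers uniqueness without Fujisawa--Kuh, in fact tightening a loose step in the paper (from $\sum_i\bar{u}^{\opt}_{i}=\sum_i\hat{u}^{\opt}_{i}=\phi\lambda$ the paper infers $\bar{u}^{\opt}_{i}\neq\hat{u}^{\opt}_{i}$ for \emph{all} $i$, which does not follow for two distinct vectors of equal sum). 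The only step you gloss is the saturated regime $\Gamma_i(u_i)=\gamma_i^{\max}$, relevant for $\theta$ near $\theta_{n^{\ast}+1}$: there $\psi_i'(u_i)=\lambda^{-1}\bigl(\gamma_i^{\max}(\gamma_i^{\max}-u_i)^{-2}+K\varphi_i(\gamma_i^{\max})\bigr)$, which is still increasing, matches the unsaturated expression at the junction, and upon setting $\psi_i'=\nu$ reproduces \eqref{eqn-14} with $g_i(\theta)$ frozen at $g_i(\theta^{\max}_{i})$, exactly the convention the paper adopts after \eqref{eqn-12} --- a routine verification rather than a gap.
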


\begin{proof}
The Lagrangian for \eqref{eqn-10} is defined as $\mathcal{L}:=\sum_{i=1}^{n}\lambda^{-1}u_{i}((\gamma_{i}-u_{i})^{-1}+K\varphi_{i}(\gamma_{i}))-\theta(\sum_{i=1}^{n}\phi^{-1}\lambda^{-1}u_{i}-1)
-\sum_{i=1}^{n}\mathbf{b}_{i}u_{i}-\sum_{i=1}^{n}\mathbf{h}_{i}(\gamma_{i}-u_{i})+\sum_{i=1}^{n}\mathbf{k}_{i}(\gamma_{i}-\gamma^{\max}_{i})$, where $\theta, \mathbf{b}_{i}, \mathbf{h}_{i}, \mathbf{k}_{i}\in\mathbb{R}^{+}$ are Lagrange multipliers. Since the cost function is not convex with respect to $(\gamma_{i}, u_{i})$, the problem \eqref{eqn-10} is not convex. To derive the optimal solution, we show the satisfaction of the KKT condition for the optimal solution first, and then the uniqueness of the optimal solution.

Since not all computing nodes need to be activated, the set $\mathcal{N}$ is partitioned into the inactivated part $\mathcal{I}\subset\mathcal{N}$ and the activated part $\mathcal{A}\subset\mathcal{N}$. Thus, $\mathcal{I}\cap\mathcal{A}=\varnothing$, $\mathcal{I}\cup\mathcal{A}=\mathcal{N}$, and $u_{i}=\gamma_{i}=0$ for all $i\in\mathcal{I}$. In this respect, the optimization problem \eqref{eqn-10} is rewritten equally as
\begin{subequations}
\label{eqn-15}
\begin{align}
\label{eqn-15a}
\min&\quad \scaleobj{0.8}{\sum\nolimits_{i\in\mathcal{A}}}\lambda^{-1}u_{i}((\gamma_{i}-u_{i})^{-1}+K\varphi_{i}(\gamma_{i})) \\
\label{eqn-15b}
\text{s.t.}& \quad G_{1}(u, \gamma):=\scaleobj{0.8}{\sum\nolimits_{i\in\mathcal{A}}}u_{i}-\phi\lambda=0,  \\
\label{eqn-15c}
&\quad G_{2i}(u, \gamma):=\gamma_{i}-\gamma^{\max}_{i}\leq0,  \quad \forall i\in\mathcal{A}, \\
\label{eqn-15d}
&\quad G_{3i}(u, \gamma):=u_{i}-\gamma_{i}<0,\quad \forall i\in\mathcal{A},  \\
\label{eqn-15e}
&\quad G_{4i}(u, \gamma):=-u_{i}<0,\quad \forall i\in\mathcal{A},
\end{align}
\end{subequations}
where $u:=(u_{1}, \ldots, u_{n})\in\mathbb{R}^{n}$ and $\gamma:=(\gamma_{1}, \ldots, \gamma_{n})\in\mathbb{R}^{n}$. From \eqref{eqn-15}, $\nabla G_{1}(u, \gamma)$ is only related to $u$ and linearly independent, whereas $\nabla G_{2}(u, \gamma):=(\nabla G_{21}(u, \gamma), \ldots, \nabla G_{2n}(u, \gamma))$ is only related to $\gamma$ and linearly independent. Hence, we can easily check that $\nabla G_{1}(u, \gamma)$ and $\nabla G_{2i}(u, \gamma)$ with $i\in\mathcal{A}$ are linearly independent. From \cite[Def. 4.1]{Hoheisel2009abadie} and \cite[Thm. 4.3]{Hoheisel2009abadie}, the Guignard constraint qualification (GCQ) holds at $(u, \gamma)$, and further from \cite[Thm. 6.1.4]{Bazaraa1976foundations}, the optimal solution satisfies the following KKT conditions:
\begin{subequations}
\label{eqn-16}
\begin{align}
\label{eqn-16a}
&\frac{\partial\mathcal{L}}{\partial u_{i}}=\frac{\gamma_{i}}{\lambda(\gamma_{i}-u_{i})^{2}}+\frac{\phi K\varphi_{i}(\gamma_{i})-\theta}{\phi\lambda}-\mathbf{b}_{i}+\mathbf{h}_{i}=0, \\
\label{eqn-16b}
&\frac{\partial\mathcal{L}}{\partial\gamma_{i}}=\frac{-u_{i}}{\lambda(\gamma_{i}-u_{i})^{2}}+\frac{Ku_{i}\varphi^{\prime}_{i}(\gamma_{i})}{\lambda}-\mathbf{h}_{i}+\mathbf{k}_{i}=0, \\
\label{eqn-16c}
& \scaleobj{1}{\sum_{i=1}^{n}}\phi^{-1}\lambda^{-1}u_{i}=1, \quad  \scaleobj{0.7}{\sum_{i=1}^{n}}\mathbf{b}_{i}u_{i}=0,   \\
\label{eqn-16d}
& \scaleobj{1}{\sum_{i=1}^{n}}\mathbf{h}_{i}(\gamma_{i}-u_{i})=0, \quad \scaleobj{0.7}{\sum_{i=1}^{n}}\mathbf{k}_{i}(\gamma_{i}-\gamma^{\max}_{i})=0.
\end{align}
\end{subequations}
From \eqref{eqn-16c}-\eqref{eqn-16d} and $u_{i}<\gamma_{i}$, $\mathbf{h}_{i}\equiv0$. Then, from \eqref{eqn-16a},
\begin{align}
\label{eqn-17}
u^{\opt}_{i}=\max\left\{0, \gamma^{\opt}_{i}-\sqrt{\frac{\phi\gamma^{\opt}_{i}}{\theta+\phi\lambda\mathbf{b}_{i}-K\phi\varphi_{i}(\gamma^{\opt}_{i})}}\right\},
\end{align}
where $\gamma^{\opt}_{i}$ is the optimal service rate. If $\gamma_{i}<\gamma^{\max}_{i}$, then $\mathbf{k}_{i}=0$ from \eqref{eqn-16d}, and further from \eqref{eqn-16b},
\begin{align}
\label{eqn-18}
u^{\opt}_{i}((\gamma^{\opt}_{i}-u^{\opt}_{i})^{-2}-K\varphi^{\prime}_{i}(\gamma^{\opt}_{i}))=0.
\end{align}
If $u^{\opt}_{i}>0$, then $\mathbf{b}_{i}=0$ from \eqref{eqn-16c}, and from \eqref{eqn-17}-\eqref{eqn-18},
\begin{align}
\label{eqn-19}
\gamma^{\opt}_{i}(\gamma^{\opt}_{i}-u^{\opt}_{i})^{-2}&=\phi^{-1}\theta-K\varphi_{i}(\gamma^{\opt}_{i}),
\end{align}
which shows $\gamma^{\opt}_{i}=\mathbf{g}_{i}(\theta)$ from \eqref{eqn-12}. If $u^{\opt}_{i}=0$ for some $i\in\mathcal{N}$, then the optimal value of $\gamma^{\opt}_{i}$ has no effects on the cost function in \eqref{eqn-10a}, and can be chosen arbitrarily from $[0, \gamma^{\max}_{i}]$. In this case, we can still choose $\gamma^{\opt}_{i}=\mathbf{g}_{i}(\theta)$, whereas $\mathbf{b}_{i}$ needs to be chosen appropriately to ensure the KKT conditions \eqref{eqn-16a}-\eqref{eqn-16d}. If $\gamma_{i}=\gamma^{\max}_{i}$, then $\mathbf{k}_{i}\geq0$ can be chosen arbitrarily. In this case, we can follow the above argument to derive the same optimal values. On the other hand, if $u^{\opt}_{i}>0$, then $\mathbf{b}_{i}=0$ and from \eqref{eqn-17}, $\theta>\phi/\gamma^{\opt}_{i}+K\phi\varphi_{i}(\gamma^{\opt}_{i})\geq\theta_{i}$, where $\theta_{i}$ is defined in \eqref{eqn-11}. If $u^{\opt}_{i}=0$, then from \eqref{eqn-17} and \eqref{eqn-12},
\begin{align}
\label{eqn-20}
\theta+\phi\lambda\mathbf{b}_{i}&\leq\phi\mathbf{g}^{-1}_{i}(\theta)+K\varphi_{i}(\mathbf{g}_{i}(\theta)) \nonumber  \\
& \leq\theta-\phi\mathbf{g}_{i}(\theta)(K\varphi^{\prime}_{i}(\mathbf{g}_{i}(\theta))-\mathbf{g}^{-2}_{i}(\theta)).
\end{align}
From $\mathbf{b}_{i}\geq0$, \eqref{eqn-20} holds if either $\mathbf{g}_{i}(\theta)=0$ or $K\varphi^{\prime}_{i}(\mathbf{g}_{i}(\theta))-\mathbf{g}^{-2}_{i}(\theta)\leq0$. If $K\varphi^{\prime}_{i}(\mathbf{g}_{i}(\theta))-\mathbf{g}^{-2}_{i}(\theta)\leq0$, then $\mathbf{g}_{i}(\theta)\leq\bar{\gamma}_{i}$, where $\bar{\gamma}_{i}=\mathbf{g}_{i}(\theta_{i})$ is from \cite[Lem. 1]{Ren2021optimal}. Since $\mathbf{g}_{i}(\cdot)$ is increasing, $\mathbf{g}_{i}(\theta)\leq\mathbf{g}_{i}(\theta_{i})$ implies $\theta\leq\theta_{i}$. Hence, in the derived optimal strategy, the computing nodes are chosen via the increasing order of $\theta_{i}$. In addition, $\sum_{i=1}^{n}u^{\opt}_{i}=\phi\lambda$ needs to be satisfied, thereby resulting in the constraint on $n^{\ast}$.

Next, we show the uniqueness of the derived solution via the contradiction. Let $(\bar{u}_{\opt}, \bar{\gamma}_{\opt})$ and $(\hat{u}_{\opt}, \hat{\gamma}_{\opt})$ be two different solutions to \eqref{eqn-10}. Since $\sum_{i=1}^{n}\bar{u}^{\opt}_{i}=\sum_{i=1}^{n}\hat{u}^{\opt}_{i}=\phi\lambda$, we have $\bar{u}^{\opt}_{i}\neq\hat{u}^{\opt}_{i}$ for all $i\in\{1, \ldots, \max\{\bar{n}^{\ast}, \hat{n}^{\ast}\}\}$, which further implies from \eqref{eqn-14} that $\bar{\gamma}^{\opt}_{i}\neq\hat{\gamma}^{\opt}_{i}$ for all $i\in\{1, \ldots, \max\{\bar{n}^{\ast}, \hat{n}^{\ast}\}\}$. However, For the equation $\sum_{i=1}^{n}(x_{i}-\sqrt{\frac{\phi x_{i}}{\theta-K\phi\varphi_{i}(x_{i})}})=\phi\lambda$ with $\phi>1$ and $\lambda>0$, its solution is unique from \cite{Fujisawa1971some}, and thus $\bar{\gamma}^{\opt}_{i}=\hat{\gamma}^{\opt}_{i}$ for all $i\in\{1, \ldots, n\}$, which results in a contradiction. Therefore, the solution to the problem \eqref{eqn-10} is unique, which completes the proof.
\end{proof}

Theorem \ref{thm-1} shows how to determine scheduling and service rates in the over-scheduling case, which is different from our previous work \cite{Ren2021optimal} where over-scheduling is not considered. In addition, Theorem \ref{thm-1} provides bounds on scheduling and service rates related to over-scheduling the allowable magnitude of which can be controlled by the tuning parameter $\phi>1$.

\section{A-AIMD Mechanism}
\label{sec-AIMDschedule}

In the derived optimal strategy, the evolution of the scheduling rates is not involved, and the effect of $\phi>1$ on the A-AIMD mechanism deserves further study. Motivated by these observations, we address the A-AIMD mechanism in this section. In particular, the convergence property of the A-AIMD mechanism is derived, which is further combined with the optimal strategy to design the AIMD parameters.

\subsection{Convergence under Over-Scheduling}
\label{subsec-knowncase}

To show convergence, we first transform the A-AIMD mechanism \eqref{eqn-6} into a switched system whose stability guarantees that scheduling rates converge to certain fixed points.

We define the following functions:
\begin{align}
\label{eqn-21}
\begin{aligned}
\mathsf{G}(u, v)&=\max\{\mathsf{G}_{1}(u, v), \mathsf{G}_{2}(v)\}, \\
\mathsf{G}_{1}(u, v)&=0.5\scaleobj{0.8}{\sum_{i=1}^{n}}(\beta_{i}u_{i}+v_{i}), \quad \mathsf{G}_{2}(v)=\phi^{-1}\scaleobj{0.8}{\sum_{i=1}^{n}}v_{i},
\end{aligned}
\end{align}
where $u = (u_1,\ldots,u_n)$, $v = (v_1,\ldots,v_n)$. Therefore, the ETM \eqref{eqn-6b} is rewritten as
\begin{align}
\label{eqn-22}
t_{k+1}&=\min\{t>t_{k}: \mathsf{G}(u(t_{k}), u(t))=\lambda\},
\end{align}
Let $\mathsf{T}_{1}(t_{k})\geq0$ ($\mathsf{T}_{2}(t_{k})\geq0$) be the inter-event period when $\mathsf{G}_{1}=\lambda$ ($\mathsf{G}_{2}=\lambda$). If $\mathsf{G}(u(t_{k}), u(t_{k+1}))=\mathsf{G}_{1}(u(t_{k}), u(t_{k+1}))$, then $\mathsf{T}_{1}(t_{k})\leq\mathsf{T}_{2}(t_{k})$ and $\mathsf{T}_{k}=\mathsf{T}_{1}(t_{k})$, where $\mathsf{T}_{k}$ is defined in \eqref{eqn-3}. In this case, combining $\mathsf{G}_{1}(u(t_{k}), u(t_{k+1}))=\lambda$ and \eqref{eqn-4} yields
\begin{align}
\label{eqn-23}
\begin{aligned}
\mathsf{T}_{1}(t_{k})&=\frac{2(\lambda-\sum_{i=1}^{n}\beta_{i}u_{i}(t_{k}))}{\sum_{i=1}^{n}\alpha_{i}}, \\
u_i(t_{k+1})&=\beta_{i}u_i(t_{k})+\alpha_{i}\mathsf{T}_{1}(t_{k}).
\end{aligned}
\end{align}
If $\mathsf{G}(u(t_{k}), u(t_{k+1}))=\mathsf{G}_{2}(u(t_{k+1}))$, then $\mathsf{T}_{k}=\mathsf{T}_{2}(t_{k})<\mathsf{T}_{1}(t_{k})$, and $\mathsf{T}_{2}(t_{k})$ is defined via $\mathsf{G}_{2}(u(t_{k+1}))=\lambda$. That is,
\begin{align}
\label{eqn-24}
\begin{aligned}
\mathsf{T}_{2}(t_{k})&=\frac{\phi\lambda-\sum_{i=1}^{n}\beta_{i}u_{i}(t_{k})}{\sum_{i=1}^{n}\alpha_{i}}, \\
u_i(t_{k+1})&=\beta_{i}u_i(t_{k})+\alpha_{i}\mathsf{T}_{2}(t_{k}).
\end{aligned}
\end{align}

In view of \eqref{eqn-23}-\eqref{eqn-24}, the aggregate dynamics of the A-AIMD mechanism \eqref{eqn-6} can be expressed by the following state-dependent switched system.
\begin{align}
\label{eqn-25}
u(t_{k+1})=\left\{\begin{aligned}
&A_{1} u(t_{k})+B_{1} & \text {if }& \beta^{\top}u(t_{k})\geq(2-\phi)\lambda \\
&A_{2} u(t_{k})+B_{2} & \text {if }& \beta^{\top}u(t_{k})<(2-\phi)\lambda
\end{aligned}\right.
\end{align}
with $u(t_{k})=(u_{1}(t_{k}), \ldots, u_{n}(t_{k}))\in\mathbb{R}^{n}, A_{1}=\diag\{\beta\}-\frac{2\alpha\beta^{\top}}{\sum_{i=1}^{n}\alpha_{i}}, A_{2}=\diag\{\beta\}-\frac{\alpha\beta^{\top}}{\sum_{i=1}^{n}\alpha_{i}}, B_{1}=\frac{2\lambda\alpha}{\sum_{i=1}^{n}\alpha_{i}}$ and $B_{2}=\frac{\phi\lambda\alpha}{\sum_{i=1}^{n}\alpha_{i}}$, where $\beta=(\beta_{1}, \ldots, \beta_{n})\in\mathbb{R}^{n}, \alpha=(\alpha_{1}, \ldots, \alpha_{n})\in\mathbb{R}^{n}$ and $\diag(\cdot)$ is the diagonal operator. Clearly, \eqref{eqn-25} is a discrete-time state-dependent switched system with two modes where the ETM \eqref{eqn-22} is embedded into the switching conditions. Since $\beta^{\top}u(t_{k})\geq0$, then $\phi<2$ is needed to ensure that the condition $\beta^{\top}u(t_{k})<(2-\phi)\lambda$ in \eqref{eqn-25} is reasonable. Therefore, we have $\phi\in(1, 2)$. Before showing the convergence of the scheduling rates via \eqref{eqn-25}, we first present a proposition to show the stability properties of matrices $A_1$ and $A_2$ as defined in \eqref{eqn-25}.

\begin{proposition}
\label{prop-1}
Let matrices $A_1$, $A_2$ be defined as in \eqref{eqn-25} and the matrix set be $\Sigma=\{A_1, A_2\}$. The joint spectral radius of $\Sigma$, namely, $\mathsf{JSR}(\Sigma)<1$ for all $\alpha_i > 0, \beta_i\in (0, 1)$, $i\in\mathcal{N}$.
\end{proposition}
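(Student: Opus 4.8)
The plan is to exploit the rank-one structure shared by $A_1$ and $A_2$ together with the hint, reflected in the paper's title, that the relevant matrices can be rendered symmetric. Both matrices have the form $A_j=\diag\{\beta\}-c_j\alpha\beta^{\top}$ with $c_1=2/\sum_i\alpha_i$ and $c_2=1/\sum_i\alpha_i$. Since $\mathsf{JSR}$ is invariant under a common similarity transformation, I would first seek a diagonal $S=\diag\{s_1,\ldots,s_n\}$, $s_i>0$, that symmetrizes the rank-one part of both matrices simultaneously. Writing $SA_jS^{-1}=\diag\{\beta\}-c_j(S\alpha)(S^{-1}\beta)^{\top}$, the rank-one term is symmetric precisely when $S\alpha$ and $S^{-1}\beta$ are parallel, which forces $s_i=\sqrt{\beta_i/\alpha_i}$ up to scaling. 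With this choice both vectors coincide with $w:=(\sqrt{\alpha_1\beta_1},\ldots,\sqrt{\alpha_n\beta_n})^{\top}$, so $\tilde A_j:=SA_jS^{-1}=\diag\{\beta\}-c_j ww^{\top}$ is symmetric for $j=1,2$, and $\mathsf{JSR}(\Sigma)=\mathsf{JSR}(\{\tilde A_1,\tilde A_2\})$.

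Once the matrices are symmetric the remainder follows from norm submultiplicativity. For any submultiplicative norm one has $\mathsf{JSR}\le\max_j\|\tilde A_j\|$, and for symmetric matrices the spectral norm equals the spectral radius, so it suffices to prove $\|\tilde A_j\|_2<1$, i.e.\ $-I\prec\tilde A_j\prec I$, for each $j$. The upper bound is immediate: $\tilde A_j=\diag\{\beta\}-c_j ww^{\top}\preceq\diag\{\beta\}\prec I$, because $c_j ww^{\top}\succeq0$ and $\beta_i<1$.

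The lower bound $\tilde A_j\succ -I$ is the only point requiring the specific values of $c_j$, and is where I expect the real work to lie. It is equivalent to $\diag\{1+\beta_i\}-c_j ww^{\top}\succ0$; since $\diag\{1+\beta_i\}\succ0$, a standard rank-one positive-definiteness criterion (equivalently the matrix determinant lemma or a Schur complement) reduces this to the scalar inequality $c_j\,w^{\top}\diag\{1+\beta_i\}^{-1}w=c_j\sum_i\tfrac{\alpha_i\beta_i}{1+\beta_i}<1$. For $A_2$ (with $c_2=1/\sum_i\alpha_i$) this holds term-by-term because $\tfrac{\beta_i}{1+\beta_i}<1$; for $A_1$ (with $c_1=2/\sum_i\alpha_i$) it holds because $\tfrac{2\beta_i}{1+\beta_i}<1$ is exactly equivalent to $\beta_i<1$. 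Summing against the weights $\alpha_i>0$ gives the claim in both cases.

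Combining the two bounds yields $\|\tilde A_j\|_2<1$ for $j=1,2$, whence $\mathsf{JSR}(\Sigma)=\mathsf{JSR}(\{\tilde A_1,\tilde A_2\})\le\max_j\|\tilde A_j\|_2<1$. The main obstacle is really the symmetrization step, namely finding a single diagonal similarity that works for both matrices at once, since without it one would have to control norms of arbitrary products of two non-normal matrices; the eigenvalue bound on $\tilde A_j$, by contrast, is an elementary rank-one perturbation estimate once the decisive role of the condition $\beta_i<1$ is recognized.
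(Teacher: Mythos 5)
Your proof is correct, and its skeleton coincides with the paper's: you use exactly the same diagonal similarity (your $S=\diag\{\sqrt{\beta_i/\alpha_i}\}$ is, up to an irrelevant scalar, the inverse of the paper's $C$), arrive at the same symmetric pair $\diag\{\beta\}-c_j ww^{\top}$, and invoke the same decisive principle that for a family of symmetric matrices the JSR reduces to the largest spectral radius (the paper cites \cite[Cor.~2.3]{Jungers2009joint}; your norm-submultiplicativity argument with $\|\cdot\|_2=\rho(\cdot)$ is the one-line proof of the direction needed). Where you genuinely diverge is the spectral bound itself: the paper imports eigenvalue interlacing results from \cite{Vlahakis2021AIMD} for both matrices and pins down the zero eigenvalue of $A_2$ via the factorization $A_2=(I-\bar{\alpha}\mathbf{1}^{\top})\diag\{\beta\}$ and $\dett(A_2)=0$, whereas you prove $-I\prec\tilde{A}_j\prec I$ directly by a Schur-complement/rank-one criterion, reducing everything to the scalar inequalities $c_j\sum_i\alpha_i\beta_i/(1+\beta_i)<1$, with the case $c_1=2/\sum_i\alpha_i$ hinging precisely on $2\beta_i/(1+\beta_i)<1\Leftrightarrow\beta_i<1$. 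Your route buys self-containedness — no external interlacing theorems — and is arguably cleaner; what it gives up is sharpness: the paper's proof yields $\mathsf{JSR}(\Sigma)<\beta_{\max}:=\max_i\beta_i$, not merely $\mathsf{JSR}(\Sigma)<1$, and that stronger conclusion is actually consumed later, in the proof of Theorem \ref{thm-2}, where the estimate \eqref{eqn-34}, $\|e(t_k)\|\leq L\beta^{k}_{\max}\|e(t_0)\|$, is justified ``from Proposition \ref{prop-1}.'' So while your argument fully proves the proposition as stated, a reader wanting to reuse it downstream would either need to sharpen your lower bound to $\tilde{A}_j\succ-\beta_{\max}I$ (for $\tilde A_1$ this requires more than your term-by-term estimate, which is exactly why the paper leans on interlacing) or weaken \eqref{eqn-34} to use a generic rate $\mathsf{JSR}(\Sigma)^k<1$, which would still suffice for boundedness of $\delta$.
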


\begin{proof}
Define the matrix $C:=\diag\{\bar{\alpha}\}(\diag\{\bar{\beta}\})^{-1}$, where $\bar{\alpha}:=(\sqrt{\alpha_{1}/(\sum_{i=1}^{n}\alpha_{i})}, \ldots, \sqrt{\alpha_{n}/(\sum_{i=1}^{n}\alpha_{i})})\in\mathbb{R}^{n}$ and $\bar{\beta}:=(\sqrt{\beta_{1}}, \ldots, \sqrt{\beta_{n}})\in\mathbb{R}^{n}$. Following the similar mechanism as in \cite[Sec. IV]{Vlahakis2021AIMD}, it is easy to see that matrices $\bar{A}_{1}=C^{-1}A_{1}C=\diag\{\beta\}-2\zeta^{\top}\zeta$ and $\bar{A}_{2}=C^{-1}A_{2}C=\diag\{\beta\}-\zeta^{\top}\zeta$ are symmetric, where $\zeta=(\bar{\alpha}_{1}\bar{\beta}_{1}, \ldots, \bar{\alpha}_{n}\bar{\beta}_{n})\in\mathbb{R}^{n}$. From \cite[Thm. 2]{Vlahakis2021AIMD}, the absolute values of all eigenvalues of $A_{1}$ are strictly less than $\beta_{\max}<1$ where $\beta_{\max}:=\max\{\beta_{i}: i\in\mathcal{N}\}$, which implies that $A_{1}$ is a Schur matrix. Based on the similarity property of matrices, $A_{2}$ and $\bar{A}_{2}$ have the same spectrum, which is denoted as $\mathcal{S}(A_{2}):=\{\xi_{1}, \ldots, \xi_{n}\}$ with an increasing order. Let $\{\beta_{1}, \ldots, \beta_{n}\}$ be in an increasing order, and from \cite[Thm. 1]{Vlahakis2021AIMD}, $\xi_{1}\leq\beta_{1}\leq\xi_{2}\leq\beta_{2}\leq\ldots\leq\xi_{n}\leq\beta_{n}$, which implies $\xi_{i}\in(0, \beta_{\max})$ for all $i\in\{2, \ldots, n\}$. Note that $A_{2}$ can be rewritten as $A_{2}=(I-\bar{\alpha}\mathbf{1}^{\top})\diag\{\beta\}$, where $\mathbf{1}\in\mathbb{R}^{n}$ is the vector whose components are 1. Here, $\mathcal{S}(\bar{\alpha}\mathbf{1}^{\top})=\{\mathbf{1}^{\top}\bar{\alpha}, 0, \ldots, 0\}$ with $\mathbf{1}^{\top}\bar{\alpha}=1$, and $\mathcal{S}(I-\bar{\alpha}\mathbf{1}^{\top})=\{0, 1, \ldots, 1\}$. The determinant of $A_{2}$ is $\dett(A_{2})=\prod^{n}_{i=1}\xi_{i}=\dett(I-\bar{\alpha}\mathbf{1}^{\top})\dett(\diag\{\beta\})=0$. Since $\xi_{i}\neq 0$ for all $i\in\{2, \ldots, n\}$, we have $\xi_{1}=0$. Therefore, $\xi_{i}\in[0, \beta_{\max})$ for all $i\in\mathcal{N}$, and $A_{2}$ is a Schur matrix. Let $\bar{\Sigma} = C^{-1}\Sigma C = \{\bar{A}_{1},\;\bar{A}_{2}\}$. Since, matrices $\bar{A}_1$, $\bar{A}_2$ are symmetric, from \cite[Cor. 2.3.]{Jungers2009joint}, we have that $\mathsf{JSR}(\bar{\Sigma})=\max\{\rho(A): A\in\bar{\Sigma}\}<\beta_{\max}<1$, where $\rho(A)$ is the spectral radius of $A$. Finally, from \cite[Prop. 1.3.]{Jungers2009joint}, we have $\mathsf{JSR}(\Sigma)=\mathsf{JSR}(\bar{\Sigma})<\beta_{\max}<1$.
\end{proof}

Roughly, Proposition \ref{prop-1} shows that the linear part of \eqref{eqn-25} is stable under arbitrary switching. In the following theorem, we show that under an appropriate selection of the AIMD parameters $\alpha_{i}, \beta_{i}$, the system \eqref{eqn-25} converges to a unique fixed point while the backlog $\delta(t_k)$ in the dispatcher is bounded for all $t_k\geq0$.

\begin{theorem}
\label{thm-2}
Consider the A-AIMD mechanism \eqref{eqn-25} with $\phi\in(1, 2)$ and $\alpha_{i}>0, \beta_{i}\in(0, 1)$ satisfying
\begin{equation}
\label{eqn-26}
1+\frac{\sum_{i=1}^{n}\alpha_{i}}{\sum_{i=1}^{n}\frac{1+\beta_{i}}{1-\beta_{i}}\alpha_{i}}=\phi.
\end{equation}
Then the following holds.
\begin{enumerate}
  \item The system \eqref{eqn-25} converges to a fixed point $u_{\con}:=(u^{\con}_{1}, \ldots, u^{\con}_{n})\in\mathbb{R}^{n}$ with
\begin{equation}
\label{eqn-27}
u^{\con}_{i}:=(1-\beta_{i})^{-1}\alpha_{i}\mathsf{T}^{\ast}, \quad  \mathsf{T}^{\ast}=\frac{2\lambda}{\sum_{i=1}^{n}\frac{1+\beta_{i}}{1-\beta_{i}}\alpha_{i}}.
\end{equation}

  \item The dispatcher's backlog $\delta$ in \eqref{eqn-2} is bounded.
\end{enumerate}
\end{theorem}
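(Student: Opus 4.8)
The plan is to establish the two claims separately, leveraging Proposition~\ref{prop-1} for the convergence part and the derived inter-event dynamics \eqref{eqn-23}--\eqref{eqn-24} for the boundedness part. For claim (1), I would first verify that the proposed point $u_{\con}$ defined in \eqref{eqn-27} is indeed a fixed point of the switched system \eqref{eqn-25}, and then invoke the joint spectral radius bound to conclude global convergence to it. For claim (2), I would translate convergence of the scheduling rates into boundedness of the backlog $\delta$ via the integral relation \eqref{eqn-3} and the dynamics \eqref{eqn-2}.

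**First I would identify the fixed point.** The natural candidate is the point where the inter-event period stabilises to a constant $\mathsf{T}^{\ast}$, so that the AI--MD update \eqref{eqn-6a} reproduces itself: imposing $u_i = \beta_i u_i + \alpha_i \mathsf{T}^{\ast}$ yields $u^{\con}_i = (1-\beta_i)^{-1}\alpha_i \mathsf{T}^{\ast}$, matching \eqref{eqn-27}. The value of $\mathsf{T}^{\ast}$ is pinned down by requiring consistency with the triggering condition. I expect that the condition \eqref{eqn-26} is precisely the algebraic constraint that makes the two modes of \eqref{eqn-25} agree at the fixed point, i.e. that guarantees $\mathsf{G}_1 = \mathsf{G}_2 = \lambda$ simultaneously there, so that $u_{\con}$ is a genuine equilibrium of the state-dependent system rather than of only one branch. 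Substituting $u_{\con}$ into $\mathsf{G}_1(u_{\con},u_{\con}) = 0.5\sum_i(\beta_i u^{\con}_i + u^{\con}_i) = \lambda$ should reproduce the expression for $\mathsf{T}^{\ast}$ in \eqref{eqn-27}, and checking $\mathsf{G}_2(u_{\con}) = \phi^{-1}\sum_i u^{\con}_i = \lambda$ together with \eqref{eqn-26} should confirm the boundary case. Once $u_{\con}$ is verified as a fixed point common to both affine maps, I would write the error dynamics $e(t_k) := u(t_k) - u_{\con}$, which obey $e(t_{k+1}) = A_{\sigma_k} e(t_k)$ for $\sigma_k \in \{1,2\}$ (the constant terms $B_1, B_2$ cancel against the fixed point). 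Proposition~\ref{prop-1} gives $\mathsf{JSR}(\{A_1, A_2\}) < 1$, which by the definition of the joint spectral radius implies $\|e(t_k)\| \to 0$ under arbitrary switching, hence $u(t_k) \to u_{\con}$.

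**The boundedness of the backlog** follows next. Since $\delta(t_k) = 0$ at every triggering instant by construction of the A-AIMD mechanism, the backlog is determined on each interval by \eqref{eqn-2}, giving $\delta(t) = \int_{t_k}^{t}(\lambda - \mathbf{u}(s))\,ds$ for $t \in (t_k, t_{k+1}]$. Because $u(t_k) \to u_{\con}$, the inter-event periods $\mathsf{T}_k$ converge to the finite limit $\mathsf{T}^{\ast}$, and the scheduling rates remain uniformly bounded; consequently $\delta(t)$ is the integral of a bounded integrand over an interval of bounded length, so $\delta(t)$ is bounded for all $t$. A clean way to present this is to bound $\delta(t) \le \max_k \int_{t_k}^{t_{k+1}} \mathbf{u}(s)\,ds$ using the over-scheduling cap, and note that convergence of $u(t_k)$ makes the sequence of maxima bounded.

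**The main obstacle** I anticipate is the fixed-point/consistency step rather than the convergence step: one must check carefully that condition \eqref{eqn-26} reconciles the two switching branches so that $u_{\con}$ is simultaneously fixed by $A_1 u + B_1$ and $A_2 u + B_2$ (or at least that the active branch at $u_{\con}$ has it as a fixed point and the switching surface $\beta^{\top}u = (2-\phi)\lambda$ is approached consistently), and that the over-scheduling magnitude $\sum_i u^{\con}_i = \phi\lambda$ holds so that the equilibrium respects the constraint \eqref{eqn-5}. A subtle point is that \eqref{eqn-25} is \emph{state-dependent}, not arbitrarily switched, so strictly speaking $\mathsf{JSR} < 1$ gives contraction under \emph{any} switching sequence and therefore certainly under the realised state-dependent one; I would note this explicitly to justify applying Proposition~\ref{prop-1}. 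Verifying that the iterates do not stall on one side of the switching surface, and that the limit genuinely satisfies both equalities in \eqref{eqn-6b}, is where the algebraic bookkeeping involving \eqref{eqn-26} must be done with care.
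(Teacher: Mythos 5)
Your plan for claim (1) is correct and essentially the paper's argument: the paper also verifies that, under \eqref{eqn-26}, the unique fixed points of the two affine branches coincide (it shows $2(\lambda-\sum_{i}\beta_{i}\hat{u}^{\con}_{i})=\phi\lambda-\sum_{i}\beta_{i}\tilde{u}^{\con}_{i}$, hence $\mathsf{T}^{\ast}_{1}=\mathsf{T}^{\ast}_{2}=\mathsf{T}^{\ast}$), passes to the error dynamics $e(t_{k+1})=A_{\sigma_k}e(t_k)$, and concludes via Proposition~\ref{prop-1}. Your explicit remark that $\mathsf{JSR}<1$ covers arbitrary switching and therefore the realised state-dependent switching is a point the paper leaves implicit, and is welcome.

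Claim (2) is where you have a genuine gap. Your argument rests on the premise that ``$\delta(t_k)=0$ at every triggering instant by construction,'' but that is false for the mechanism \eqref{eqn-6}: the condition $\delta=0$ only motivates the \emph{first} trigger in \eqref{eqn-6b}. When the over-scheduling cap (Mode 2) fires, the event occurs \emph{before} the average condition is met, and the backlog strictly increases. Computing the recursion as the paper does, one gets $\delta(t_{k+1})=\delta(t_k)$ in Mode 1, but $\delta(t_{k+1})=\delta(t_k)-0.5\,\beta^{\top}e(t_k)\mathsf{T}_k>\delta(t_k)$ in Mode 2 (since Mode 2 is active exactly when $\beta^{\top}e(t_k)<0$); see \eqref{eqn-31}. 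Thus $\delta(t_k)$ is a nondecreasing step process, and your ``bounded integrand over an interval of bounded length'' reasoning only controls the within-interval fluctuation of $\delta(t)$, not the accumulated value $\delta(t_k)$. Moreover, mere convergence $u(t_k)\to u_{\con}$ does not suffice: the increments $0.5|\beta^{\top}e(t_k)|\mathsf{T}_k$ must be \emph{summable}, and a decay like $\|e(t_k)\|\sim 1/k$ would give a divergent accumulation. The missing ingredient is the geometric rate supplied by Proposition~\ref{prop-1}: since $\|e(t_k)\|\leq L(\mathsf{JSR}(\Sigma))^{k}\|e(t_0)\|\leq L\beta_{\max}^{k}\|e(t_0)\|$ and $\mathsf{T}_k\leq\mathsf{T}_{\max}$ (bounded, e.g., by the extreme case $\beta_i u_i(t_k)=0$ in the triggering conditions), the total increase is dominated by a convergent geometric series, yielding the explicit bound $\delta(t_{k+1})\leq\delta(t_0)+\frac{L\beta_{\max}\mathsf{T}_{\max}\|e(t_0)\|}{2(1-\beta_{\max})}$. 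Without identifying the backlog recursion across modes and invoking this summability, claim (2) is not established.
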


\begin{proof}
To begin with, we show the first item. Since the matrices $A_1$ and $A_2$ in \eqref{eqn-25} are Schur, there are unique fixed points $\hat{u}_{\con}:=(\hat{u}^{\con}_{1}, \ldots, \hat{u}^{\con}_{n})$ and $\tilde{u}_{\con}:=(\tilde{u}^{\con}_{1}, \ldots, \tilde{u}^{\con}_{n})$ such that $\hat{u}_{\con}=A_{1}\hat{u}_{\con}+B_{1}$ and $\tilde{u}_{\con} = A_{2}\tilde{u}_{\con}+B_{2}$. From \cite{Ren2021optimal, Vlahakis2021AIMD}, we have
\begin{align}
\label{eqn-28}
\hat{u}^{\con}_{i}:=(1-\beta_{i})^{-1}\alpha_{i}\mathsf{T}^{\ast}_{1}, \quad
\tilde{u}^{\con}_{i}:=(1-\beta_{i})^{-1}\alpha_{i}\mathsf{T}^{\ast}_{2},
\end{align}
where $\mathsf{T}^{\ast}_{1}, \mathsf{T}^{\ast}_{2}>0$ are the inter-event periods associated to the fixed points in Modes 1 and 2, respectively. From \eqref{eqn-23}-\eqref{eqn-24} and \eqref{eqn-26}, we have
\begin{equation}
\label{eqn-29}
2\left(\lambda-\scaleobj{0.8}{\sum_{i=1}^{n}}\beta_{i}\hat{u}^{\con}_{i}\right)=\phi\lambda-\scaleobj{0.8}{\sum_{i=1}^{n}}\beta_{i}\tilde{u}^{\con}_{i}.
\end{equation}
Combining \eqref{eqn-29} with \eqref{eqn-28} yields $\mathsf{T}^{\ast}_{1}=\mathsf{T}^{\ast}_{2}=\mathsf{T}^{\ast}$ with $\mathsf{T}^{\ast}$ defined in \eqref{eqn-27}. Hence, $\hat{u}_{\con}=\tilde{u}_{\con}=u_{\con}$ with $u_{\con}:=(u^{\con}_{1}, \ldots, u^{\con}_{n})$. Next, we show that the system \eqref{eqn-25} converges to the fixed point as in \eqref{eqn-27}. We define the error $e(t_{k}):=u(t_{k})-u_{\con}$, whose dynamics is given by
\begin{align}
\label{eqn-30}
e(t_{k+1})=\left\{\begin{aligned}
&A_{1}e(t_{k}) & \text {if }& \beta^{\top}e(t_{k})\geq0 \\
&A_{2}e(t_{k}) & \text {if }& \beta^{\top}e(t_{k})<0.
\end{aligned}\right.
\end{align}
From Proposition \ref{prop-1} and \cite[Cor. 1.1]{Jungers2009joint}, the system \eqref{eqn-30} is asymptotically stable, which means that the origin is a fixed point for \eqref{eqn-30}, i.e., $\lim_{k\to\infty}e(t_k)=0$, or $\lim_{k\to\infty}u(t_k)=u_{\con}$, with $u_i^{\con}$ given in \eqref{eqn-27}, which completes the proof of the first item.

Next, we show the second item. From \eqref{eqn-2}, the backlog $\delta(t_{k+1})$ is given below:
\begin{align*}
\delta(t_{k+1})&=\delta(t_{k})+\int^{t_{k+1}}_{t_{k}}(\lambda-u(t))dt \\
&=\delta(t_{k})+\lambda\mathsf{T}_{k}-0.5(\beta^{\top}u(t_{k})+\mathbf{1}^{\top}u(t_{k+1}))\mathsf{T}_{k}.
\end{align*}
In Mode 1, we have from the event-triggering condition that $0.5(\beta^{\top}u(t_{k})+\mathbf{1}^{\top}u(t_{k+1}))=\lambda$, which implies that $\delta(t_{k+1})=\delta(t_{k})$.
In Mode 2, the event-triggering condition results in $\mathbf{1}^{\top}u(t_{k+1})=\phi\lambda$, and thus
\begin{align*}
\delta(t_{k+1})&=\delta(t_{k})+\frac{(2-\phi)\lambda-\beta^{\top}u(t_{k})}{2}\mathsf{T}_{k},
\end{align*}
which, from \eqref{eqn-29}, becomes
\begin{align*}
\delta(t_{k+1})&=\delta(t_{k})+\frac{\beta^{\top}u_{\con}-\beta^{\top}u(t_{k})}{2}\mathsf{T}_{k}\\
&=\delta(t_{k})-0.5\beta^{\top}e(t_{k})\mathsf{T}_{k}.
\end{align*}
Therefore, the dynamics of $\delta(t_{k+1})$ is
\begin{align}
\label{eqn-31}
\delta(t_{k+1})=\left\{\begin{aligned}
&\delta(t_{k}) & \text {if }& \beta^{\top}e(t_{k})\geq0 \\
&\delta(t_{k})-0.5\beta^{\top}e(t_{k})\mathsf{T}_{k} & \text {if }& \beta^{\top}e(t_{k})<0.
\end{aligned}\right.
\end{align}

We note that $\mathsf{T}_{k}=t_{k+1}-t_{k}$ and that $t_{k+1}$ is determined via \eqref{eqn-6b}. Hence, for any given $phi\in(1, 2), \alpha_{i}>0, \beta_{i}\in(0, 1)$, we have $\mathsf{T}_{k}$ is bounded to ensure $\sum_{i=1}^{n}(\beta_{i}u_{i}(t_{k})+0.5\alpha_{i}\mathsf{T}_{k})=\lambda$ or $\sum_{i=1}^{n}(\beta_{i}u_{i}(t_{k})+\alpha_{i}\mathsf{T}_{k})=\phi\lambda$. The boundedness of $\mathsf{T}_{k}$ can be obtained by considering the extreme case where $\beta_{i}u_{i}(t_{k})\equiv0$ for all $i\in\mathcal{N}$. In this case, $\sum_{i=1}^{n}0.5\alpha_{i}\mathsf{T}_{k}=\lambda$ implies $\mathsf{T}_{k}\leq2\lambda/(\sum_{i=1}^{n}\alpha_{i})$ and $\sum_{i=1}^{n}\alpha_{i}\mathsf{T}_{k}=\phi\lambda$ implies $\mathsf{T}_{k}\leq\phi\lambda/(\sum_{i=1}^{n}\alpha_{i})$. Therefore, the upper bound of $\mathsf{T}_{k}$ is finite and assumed to be $\mathsf{T}_{\max}>0$. More precisely, $\mathsf{T}_{\max}=\min\{\tau\in\mathbb{R}^{+}: t_{k+1}-t_{k}\leq\tau, k\geq0\}$, where $t_{k}, t_{k+1}$ are defined in \eqref{eqn-6b}. From \eqref{eqn-31}, we have the following bound inequality
\begin{align}
\label{eqn-32}
\delta(t_{k+1})\leq\delta(t_{k})+0.5|\beta^{\top}e(t_{k})|\mathsf{T}_{\max}.
\end{align}
By implementing \eqref{eqn-32} iteratively from $t_{0}$ to $t_{k+1}$, we obtain
\begin{align}
\label{eqn-33}
\delta(t_{k+1})&\leq\delta(t_{0})+0.5\sum^{k}_{l=0}|\beta^{\top}e(t_{l})|\mathsf{T}_{\max} \nonumber \\
&\leq\delta(t_{0})+0.5\beta_{\max}\sum^{k}_{l=0}\|e(t_{l})\|\mathsf{T}_{\max}.
\end{align}
In addition, the asymptotic stability of the system \eqref{eqn-25} implies that there exists $L\geq1$ such that
\begin{align}
\label{eqn-34}
\|e(t_{k})\|\leq L(\mathsf{JSR}(\Sigma))^{k}\|e(t_{0})\|\leq L\beta^{k}_{\max}\|e(t_{0})\|,
\end{align}
where the second inequality holds from Proposition \ref{prop-1}. Combining \eqref{eqn-33} and \eqref{eqn-34} yields
\begin{align*}
\delta(t_{k+1})&\leq\delta(t_{0})+0.5\beta_{\max}\mathsf{T}_{\max}\sum^{k}_{l=0}L\beta^{l}_{\max}\|e(t_{0})\| \\
&\leq\delta(t_{0})+0.5\|e(t_{0})\|\mathsf{T}_{\max}L\sum^{k}_{l=0}\beta^{l+1}_{\max} \\
&\leq\delta(t_{0})+\frac{L\beta_{\max}\mathsf{T}_{\max}\|e(t_{0})\|}{2(1-\beta_{\max})}.
\end{align*}
Since both $\delta(t_{0})$ and $\|e(t_{0})\|$ are bounded at the initial time, we have the boundedness of $\delta$, which hence completes the proof of the second item.
\end{proof}

In Theorem \ref{thm-2}, the convergence property is preserved for the A-AIMD mechanism under \eqref{eqn-26}, which can be treated as the constraints on the AIMD parameters $\alpha_{i}, \beta_{i}$ and will be discussed in the next subsection.

\subsection{Tuning of AIMD Parameters}
\label{subsec-parameterdesign}

From Theorem \ref{thm-2}, we have that if the AIMD parameters take values in the set induced by \eqref{eqn-26} the resulting system \eqref{eqn-25} is stable with a unique fixed point. In this section, we combine the solutions in Theorems \ref{thm-1}-\ref{thm-2} to ensure the convergence of all scheduling rates to the optimal scheduling rates, thereby combining the optimal strategy with the A-AIMD mechanism.

From Theorem \ref{thm-1}, $u^{\opt}_{i}=\gamma^{\opt}_{i}=0$ for all $i\in\mathcal{I}$, and thus we set $\alpha_{i}=\beta_{i}=0$ directly for all $i\in\mathcal{I}$. Next, we investigate the case of $i\in\mathcal{A}$. For all $i\in\mathcal{A}$, there are two constraints on the scheduling rates: $u^{\con}_{i}=u^{\opt}_{i}$ from our goal and $u^{\opt}_{i}<\gamma^{\opt}_{i}$ from Assumption \ref{asp-1}. Hence, there exist two cases. The first case is that $(1-\beta_{i})^{-1}\alpha_{i}\mathsf{T}^{\ast}\geq\gamma^{\opt}_{i}$. For this case, from Assumption \ref{asp-1} and $u^{\opt}_{i}=u^{\con}_{i}$, we have $u^{\opt}_{i}=\gamma^{\opt}_{i}-\epsilon$, where $\epsilon>0$ is sufficiently small to avoid $u^{\opt}_{i}=\gamma^{\opt}_{i}$. From \eqref{eqn-14}, $K\varphi^{\prime}_{i}(\gamma_{i}^{\opt})=\epsilon^{-2}$. Since $\epsilon>0$ is arbitrarily small, we have from \eqref{eqn-9} and the derivative of $\varphi_{i}$ that $\gamma^{\opt}_{i}$ needs to be sufficiently large, which contradicts with the boundedness of $\gamma_{i}$. Therefore, the first case is unreasonable, and we only need to consider the second case: $(1-\beta_{i})^{-1}\alpha_{i}\mathsf{T}^{\ast}<\gamma^{\opt}_{i}$. In the second case, from \eqref{eqn-28}, $u^{\opt}_{i}=(1-\beta_{i})^{-1}\alpha_{i}\mathsf{T}^{\ast}$, which is rewritten as
\begin{align}
\label{eqn-35}
(1-\beta_{i})u^{\opt}_{i}=\alpha_{i}\mathsf{T}^{\ast}, \quad \forall i\in\mathcal{A}.
\end{align}
Due to \eqref{eqn-26} and $\mathsf{T}^{\ast}$ in \eqref{eqn-35}, the parameters $\alpha_{i}$ and $\beta_{i}$ affect each other. We next propose two methods to design the AIMD parameters.

\subsubsection{Linear polynomial based design}
Let $x_{i}:=\alpha_{i}\mathsf{T}^{\ast}, y_{i}:=\beta_{i}$. Hence, \eqref{eqn-35} equals to $x_{i}+u^{\opt}_{i}y_{i}=u^{\opt}_{i}$. Since $\phi>1$ is known \emph{a priori}, we have from \eqref{eqn-3} and \eqref{eqn-5},
\begin{align}
\label{eqn-36}
\scaleobj{0.8}{\sum_{i=1}^{n}}x_{i}=2(\phi-1)\lambda, \quad  \scaleobj{0.8}{\sum_{i=1}^{n}}(0.5x_{i}+u^{\opt}_{i}y_{i})=\lambda,
\end{align}
where, the former one is equivalent to the constraint in \eqref{eqn-26}-\eqref{eqn-27} on $x_{i}$. All these constraints can be written into a unified equation form $Az=B$ with $z:=(x, y, \phi), x:=(x_{1}, \ldots, x_{n}), y:=(y_{1}, \ldots, y_{n})$ and
\begin{align}
\label{eqn-37}
A:=\begin{bmatrix}
I_{n\times n} & \diag(u^{\top}_{\opt}) & 0 \\ \mathbf{1}^{\top} & \mathbf{0}^{\top} &  -2\lambda \\  0.5\mathbf{1}^{\top} & u^{\top}_{\opt} & 0
\end{bmatrix}, \quad
B:=\begin{bmatrix}
u_{\opt} \\ -2\lambda \\ \lambda
\end{bmatrix},
\end{align}
where $u_{\opt}:=(u^{\opt}_{1}, \ldots, u^{\opt}_{n})\in\mathbb{R}^{n}$ and $\mathbf{0}\in\mathbb{R}^{n}$ is the vector whose components are 0. Since $\phi>0$ is known \emph{a priori}, $Az=B$ is an equation with a known variable. From \cite{Strang2005linear}, we can check that $A\in\mathbb{R}^{(n+2)\times(2n+1)}$ is full-rank and its null space is non-trivial. Hence, the solution to $Az=B$ is existent and non-unique. Different approaches, e.g., the method of least squares, can be applied to find feasible solutions.

\subsubsection{Optimization-based design}
Since the above method cannot show the uniqueness of the solution, another design method is based on the optimization theory. Here the goal is to minimize the Euclidean norm of the AIMD parameters under \eqref{eqn-35}-\eqref{eqn-36}. The optimization problem is formulated below:
\begin{align}
\label{eqn-38}
\begin{aligned}
\min_{x_{i}, y_{i}} &\ \scaleobj{0.8}{\sum_{i=1}^{n}}(x^{2}_{i}+y^{2}_{i})+\phi^{2} \quad   \\
\text{ s.t. }& \ \eqref{eqn-35}-\eqref{eqn-36}, \  u^{\top}_{\opt}y<\lambda, \ x_{i}>0,\  y_{i}\in(0, 1).
\end{aligned}
\end{align}
From \eqref{eqn-35}, the cost function in \eqref{eqn-38} can be rewritten as a function of $y$ and $\phi$. Similar to the first method, $\phi$ can be involved in the cost function such that we have the co-design of the optimal over-scheduling and AIMD parameters. If $\phi$ is fixed, then the cost function is only related to $y$. Since the problem \eqref{eqn-38} is convex, the solution is unique, which hence shows the advantages of the optimization-based method.

\section{Numerical Results}
\label{sec-example}

\begin{figure}[!t]
\begin{center}
\begin{picture}(60, 85)
\put(-60, -15){\resizebox{60mm}{35mm}{\includegraphics[width=2.5in]{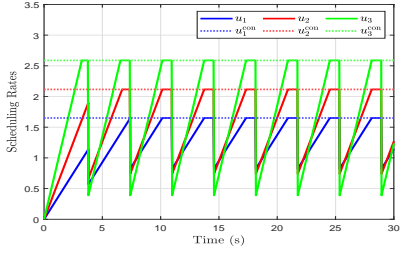}}}
\end{picture}
\end{center}
\caption{The evolution of all scheduling rates $u_{i}$, $i =\{1, 2, 3\}$.}
\label{fig-4}
\end{figure}

Consider a computing system with 3 heterogeneous computing nodes with different service capacities and power consumptions. To be specific, let the coefficients in \eqref{eqn-10} be $a_{1}=0.1, a_{2}=0.2, a_{3}=0.4, b_{1}=b_{2}=b_{3}=2, c_{1}=0.3, c_{2}=0.6, c_{3}=0.9, d_{1}=1, d_{2}=2, d_{3}=3$. In addition, $\gamma^{\max}_{1}=6, \gamma^{\max}_{2}=9, \gamma^{\max}_{3}=11$ and the weight $K=1.2$. Therefore, from \eqref{eqn-11}, $\theta_{1}=2.6373\phi, \theta_{2}=4.3524\phi, \theta_{3}=6.0133\phi$. Hence, the lightweight computing node (i.e., Node 1) is always activated. Moreover, the upper bound of $\theta_{i}$ is $\theta_{4}=201.6\phi$. With different $\phi\in[1, 2]$, the number of the activated computing nodes is presented in Fig. \ref{fig-2}. Let $\theta=13.648$
in \eqref{eqn-14}-\eqref{eqn-15}, $\phi=1.4$ and $\lambda=5.5$, and from Theorem \ref{thm-1}, the optimal rates are $u^{\opt}_{1}=4.1619, u^{\opt}_{2}=2.4024, u^{\opt}_{3}=1.4182, \gamma^{\opt}_{1}=4.9647, \gamma^{\opt}_{2}=3.0770, \gamma^{\opt}_{3}=1.9660$. The A-AIMD mechanism and over-scheduling are studied below.

\begin{figure}[!t]
\begin{center}
\begin{picture}(60, 85)
\put(-60, -15){\resizebox{60mm}{35mm}{\includegraphics[width=2.5in]{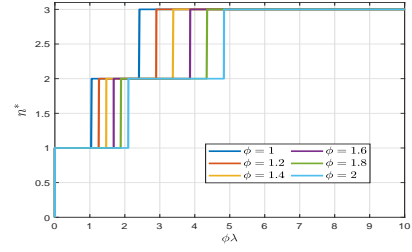}}}
\end{picture}
\end{center}
\caption{The evolution of the number $n^{\ast}$ with different $\phi\in[1, 2]$.}
\label{fig-2}
\end{figure}

First, we assume that the AIMD parameters are known. Let $\alpha_{1}=0.3, \alpha_{2}=0.5, \alpha_{3}=0.8, \beta_{1}=0.5, \beta_{2}=0.35, \beta_{3}=0.15$. From Theorem \ref{thm-2}, $\mathsf{T}^{\ast}=2.75, u^{\con}_{1}=1.65, u^{\con}_{2}=2.1154, u^{\con}_{3}=2.5882$; see Fig. \ref{fig-4}. It is easy to check that $\lambda<u^{\con}_{1}+u^{\con}_{2}+u^{\con}_{3}\leq\phi\lambda$. Next, we consider the known over-scheduling case. From Section \ref{subsec-parameterdesign}, the design of the AIMD parameters is formulated into an optimization problem \eqref{eqn-38}, and thus we have $\alpha^{\ast}_{1}\mathsf{T}^{\ast}=1.7662, \alpha^{\ast}_{2}\mathsf{T}^{\ast}=1.5559, \alpha^{\ast}_{3}\mathsf{T}^{\ast}=1.0780, \beta^{\ast}_{1}=0.5584, \beta^{\ast}_{2}=0.3197, \beta^{\ast}_{3}=0.1867$. That is, for each $i\in\{1, 2, 3\}$, $\beta_{i}$ is fixed, whereas $\alpha^{\ast}_{i}$ and $\mathsf{T}^{\ast}$ can be determined via the user requirement. The feasible choice is presented in Fig. \ref{fig-3}. For instance, a feasible choice is $\alpha^{\ast}_{1}=0.4750, \alpha^{\ast}_{2}=0.4185, \alpha^{\ast}_{3}=0.2899$ and $\mathsf{T}^{\ast}=3.7180$. Finally, if $\phi$ is embedded into \eqref{eqn-38}, then we derive the optimal coefficient $\phi^{\ast}=1.4514$ and the optimal AIMD parameters $\alpha^{\ast}_{1}\mathsf{T}^{\ast}=2.0799, \alpha^{\ast}_{2}\mathsf{T}^{\ast}=1.7086, \alpha^{\ast}_{3}\mathsf{T}^{\ast}=1.1765, \beta^{\ast}_{1}=0.5003, \beta^{\ast}_{2}=0.2888, \beta^{\ast}_{3}=0.1705$, which thus guarantee the optimal strategy in Theorem \ref{thm-1}. In particular, if $\mathsf{T}^{\ast}=1.5$, then $\alpha^{\ast}_{1}=1.3866, \alpha^{\ast}_{2}=1.1391, \alpha^{\ast}_{3}=0.7843$; see the bold points in Fig. \ref{fig-3}.

\section{Conclusion}
\label{sec-conclusion}

In this paper, we studied the resource allocation problem of computing systems under over-scheduling phenomena. We proposed an optimisation problem and derived an optimal scheduling and allocation strategy. To realize the optimal scheduling solution under over-scheduling, we proposed a feedback mechanism based on a new AIMD-like algorithm whose stability was proved by means of the boundedness of the joint spectral radius associated with an equivalent switched system. Finally, two methods for tuning AIMD parameters were proposed to fit an optimal AIMD-based mechanism to the optimal solution.

\begin{figure}[!t]
\begin{center}
\begin{picture}(65, 85)
\put(-65, -15){\resizebox{65mm}{35mm}{\includegraphics[width=2.5in]{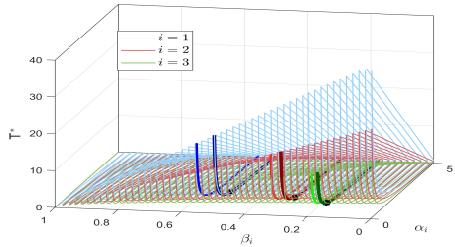}}}
\end{picture}
\end{center}
\caption{Illustration of the relation among $\alpha_{i}, \mathsf{T}^{\ast}$ and $\beta_{i}$. The curves with light colors are from \eqref{eqn-26}. The curves with blue, red and green colors are a feasible solution for the case where $\phi=1.4$. The curves with dark colors are for the optimal case $\phi^{\ast}=1.4514$.}
\label{fig-3}
\end{figure}


\end{document}